\documentclass[english,reqno]{amsart}
\usepackage[T1]{fontenc}
\usepackage[latin9]{inputenc}
\usepackage{amsthm}
\usepackage{amssymb}

\makeatletter
%%%%%%%%%%%%%%%%%%%%%%%%%%%%%% Textclass specific LaTeX commands.
\numberwithin{equation}{section}
\numberwithin{figure}{section}
\theoremstyle{plain}
\newtheorem{thm}{\protect\theoremname}[section]
  \theoremstyle{plain}
  \newtheorem{cor}[thm]{\protect\corollaryname}
  \theoremstyle{plain}
  \newtheorem{lem}[thm]{\protect\lemmaname}
  \theoremstyle{remark}
  \newtheorem{rem}[thm]{\protect\remarkname}

%%%%%%%%%%%%%%%%%%%%%%%%%%%%%% User specified LaTeX commands.

\usepackage{amscd}

\theoremstyle{plain}

\newcommand{\N}{\mathbb{N}}
\newcommand{\R}{\mathbb{R}}

\makeatother

\usepackage{babel}
  \providecommand{\corollaryname}{Corollary}
  \providecommand{\lemmaname}{Lemma}
  \providecommand{\remarkname}{Remark}
\providecommand{\theoremname}{Theorem}

\begin{document}

\title{On the Brezis-Lieb Lemma without pointwise convergence}

\author{Adimurthi}

\address{TIFR CAM, Sharadanagar, P.B. 6503 Bangalore 560065, India}

\email{aditi@math.tifrbng.res.in}

\author{Cyril Tintarev}

\address{Uppsala University, P.O.Box 480, 75 106 Uppsala, Sweden }

\email{tintarev@math.uu.se}

\maketitle

\section{Introduction}

Brezis-Lieb Lemma (\cite{BL}) is a refinement of Fatou lemma that
plays an important role in analysis of partial differential equations.
Let $\Omega,\mu$ be a measure space. The lemma says that if $p\in[1,\infty)$,
$u_{k}\rightharpoonup u$ in $L^{p}(\Omega,\mu)$ and $u_{k}\to u$
a.e., then
\begin{equation}
\int_{\Omega}|u_{k}|^{p}d\mu-\int_{\Omega}|u|^{p}d\mu-\int_{\Omega}|u_{k}-u|^{p}d\mu\to0.\label{eq:BLorig}
\end{equation}
In concrete applications convergence a.e. might be hard to verify,
while the weak convergence condition rarely presents a difficulty,
since $L^{p}(\Omega,\mu)$ with $p\in(1,\infty)$ is reflexive and
any bounded sequence there has a weakly convergent subsequence. Thus
it is natural to ask what possible analogs of (\ref{eq:BLorig}) may
exist for sequences in $L^{p}$ that do not necessarily converge everywhere. This situation arises in applications to quasilinear elliptic PDE when $u_k$ are vector-valued functions of the form $\nabla w_k\in L^p$ and one cannot rely on compactness of local Sobolev imbeddings that yield a.e. convergence of $w_k$ but not of their gradients.
An immediate analog is given by weak semicontinuity of the norm, namely
\[
u_{k}\rightharpoonup u\:\Longrightarrow\int_{\Omega}|u_{k}|^{p}d\mu\ge\int_{\Omega}|u|^{p}d\mu+o(1),
\]
but this inequality is quite crude as it does not account for the
norm of the remainder $u_{k}-u$. 

On the other hand, there are some cases where Brezis-Lieb lemma holds
under assumption of weak convergence alone. One is when $\Omega$
is a countable set equipped with the counting measure, because in
this case pointwise convergence follows from weak convergence. Another
is the case $p=2$, when the conclusion of Brezis-Lieb lemma holds
even if convergence a.e. is not assumed. This follows from an elementary
relation in the general Hilbert space:

\begin{equation}
u_{k}\rightharpoonup u\:\Longrightarrow\|u_{k}\|^{2}=\|u_{k}-u\|^{2}+\|u\|-2(u_k-u,u)=\|u_{k}-u\|^{2}+\|u\|+o(1).\label{eq:HilbBl}
\end{equation}
Since in both examples the norm satisfies the Opial condition \cite{Opial},
it would be tempting to conjecture that the condition of a.e. convergence
may be dropped whenever the Opial condition holds, or, in case of
a strictly convex Banach space $X$ with single-valued duality map,
whenever the following sharp sufficient condition, which implies Opial
condition (see \cite{Opial}), holds: $u_{k}\rightharpoonup0$ in $X$ $\Longrightarrow u_{k}^{*}\rightharpoonup0$
. This prompted the authors of a forthcoming paper \cite{SoliTi} to
prove the following analog of Brezis-Lieb Lemma with a.e. convergence
replaced by weak convergence of a dual sequence. However, as we show
in Corollary \ref{cor:counterexample} below, the condition $p\ge3$
(that has nothing to do with Opial's condition or dual mapping) cannot
be relaxed. The condition  $|u_{k}-u|^{p-2}(u_{k}-u)\rightharpoonup0$ below is not arbitrary, but is an assumption of weak convergence of the duality mapping, which can be equivalently expressed as $(u_k-u)^*\rightharpoonup 0$. 
\begin{thm}
\label{thm:newbl}Let $(\Omega,\mu)$ be a measure space and let If
$p\in[3,\infty)$. Assume that $u_{k}\rightharpoonup u$ in $L^{p}(\Omega,\mu)$
and $|u_{k}-u|^{p-2}(u_{k}-u)\rightharpoonup0$ in $L^{p'}(\Omega,\mu)$,
$p'=\frac{p}{p-1}$. Then

\begin{equation}
\int_{\Omega}|u_{k}|^{p}d\mu\ge\int_{\Omega}|u|^{p}d\mu+\int_{\Omega}|u_{k}-u|^{p}d\mu+o(1).\label{eq:BL}
\end{equation}

\end{thm}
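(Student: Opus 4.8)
The plan is to deduce the theorem from a single pointwise inequality together with the two weak-convergence hypotheses. Writing $v_k:=u_k-u$, so that $v_k\rightharpoonup 0$ in $L^p$ and $|v_k|^{p-2}v_k\rightharpoonup 0$ in $L^{p'}$, the estimate \eqref{eq:BL} is equivalent to $\int_\Omega|u+v_k|^p\,d\mu\ge\int_\Omega|u|^p\,d\mu+\int_\Omega|v_k|^p\,d\mu+o(1)$. The key claim I would isolate is the elementary inequality
\[
|a+b|^p\ \ge\ |a|^p+|b|^p+p|a|^{p-2}a\cdot b+p|b|^{p-2}b\cdot a\qquad(a,b\in\R^N),
\]
which holds precisely because $p\ge 3$ (at $p=3$ it becomes an identity when $a,b$ are parallel and like-signed, and it fails for $p\in(2,3)$, consistently with Corollary~\ref{cor:counterexample}). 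Granting it, I would put $a=u(x)$, $b=v_k(x)$ and integrate: since $|u|^{p-2}u\in L^{p'}$ and $v_k\rightharpoonup 0$ in $L^p$ we get $\int_\Omega|u|^{p-2}u\cdot v_k\,d\mu\to 0$, and since $u\in L^p$ and $|v_k|^{p-2}v_k\rightharpoonup 0$ in $L^{p'}$ we get $\int_\Omega|v_k|^{p-2}v_k\cdot u\,d\mu\to 0$; \eqref{eq:BL} follows immediately. So the whole difficulty is the pointwise inequality, and this is where (and why) the assumption $p\ge 3$ enters.

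To prove that inequality I would first note that both sides depend on $(a,b)$ only through $|a|$, $|b|$ and $a\cdot b$, so by $p$-homogeneity we may take $|a|=1$ and write $b=r\omega$ with $r=|b|\ge 0$, $|\omega|=1$, $c:=a\cdot\omega\in[-1,1]$; the claim becomes $\Phi(r,c):=(1+2rc+r^2)^{p/2}-1-r^p-prc(1+r^{p-2})\ge 0$ on $[0,\infty)\times[-1,1]$. One computes $\partial_c^2\Phi=p(p-2)r^2(1+2rc+r^2)^{p/2-2}\ge 0$, so for each $r$ the minimum of $\Phi(r,\cdot)$ over $[-1,1]$ is attained at $c=1$, at $c=-1$, or at the interior critical point $c^\ast$. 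The cases $c=\pm1$ reduce to $(1+r)^p\ge 1+r^p+pr+pr^{p-1}$ and $|1-r|^p+pr(1+r^{p-2})\ge 1+r^p$, which I would dispatch with Bernoulli-type convexity estimates for the convex maps $t\mapsto t^{p-1}$ and $t\mapsto t^{p-2}$ (convex since $p\ge 3$). At $c=c^\ast$, where $(1+2rc^\ast+r^2)^{p/2-1}=1+r^{p-2}$, a direct computation — substituting $\tau=r^{p-2}\ge 0$ and $q=\tfrac{p}{p-2}\in(1,3]$, so that $r^2=\tau^{q-1}$ and $r^p=\tau^q$ — collapses $\Phi(r,c^\ast)$ to $\tfrac{1}{q-1}\,\psi(\tau)$ with $\psi(\tau):=1+q\tau+q\tau^{q-1}+\tau^q-(1+\tau)^q$, so it remains to show $\psi\ge 0$ on $[0,\infty)$ for all $q\in(1,3]$.

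For this last step I would exploit the symmetry $\psi(1/\tau)=\tau^{-q}\psi(\tau)$ to restrict to $\tau\in[0,1]$. When $q\in(1,2]$ one has $\psi(0)=0$ and $\psi'(\tau)=q\bigl[1+(q-1)\tau^{q-2}+\tau^{q-1}-(1+\tau)^{q-1}\bigr]\ge 0$ by subadditivity of $t\mapsto t^{q-1}$ (exponent $\le 1$). When $q\in(2,3)$ I would split $\psi=g+\bigl(q\tau^{q-1}-\tfrac{q(q-1)}{2}\tau^2\bigr)$ with $g(\tau):=1+q\tau+\tfrac{q(q-1)}{2}\tau^2+\tau^q-(1+\tau)^q$; here $g(0)=g'(0)=0$ and $g''(\tau)=q(q-1)\bigl[1+\tau^{q-2}-(1+\tau)^{q-2}\bigr]\ge 0$ by subadditivity of $t\mapsto t^{q-2}$ (exponent in $(0,1)$), so $g\ge 0$, while $q\tau^{q-1}\ge\tfrac{q(q-1)}{2}\tau^2$ on $[0,1]$ since $\tau^{q-3}\ge 1>\tfrac{q-1}{2}$; for $q=3$ (that is $p=3$) one has $\psi\equiv 0$. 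The one-variable convexity estimates are routine bookkeeping; the genuine obstacle is to pin down the right pointwise inequality and to see that $p=3$ is exactly the borderline — in the variable $q=p/(p-2)$ it is precisely where $\psi$ stops vanishing identically and the relevant second-derivative signs turn favorable, which is also why no Opial-type condition alone could replace it.
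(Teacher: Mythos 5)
Your proposal is correct and follows essentially the same route as the paper: reduce the theorem to the pointwise inequality $|a+b|^{p}\ge|a|^{p}+|b|^{p}+p|a|^{p-2}ab+p|b|^{p-2}ba$ and then observe that the two cross terms integrate to $o(1)$ by the hypotheses $u_k-u\rightharpoonup0$ in $L^{p}$ (paired with $|u|^{p-2}u\in L^{p'}$) and $|u_k-u|^{p-2}(u_k-u)\rightharpoonup0$ in $L^{p'}$ (paired with $u\in L^{p}$). The only difference is that the paper imports the underlying elementary inequality (\ref{eq:elem}) from \cite{SoliTi} without proof (and in the vector-valued Theorem \ref{thm:newbl-1} reduces to it by convexity in $\theta$), whereas you prove it in full --- including the interior critical point in $c$, via the reduction to $\psi(\tau)=1+q\tau+q\tau^{q-1}+\tau^{q}-(1+\tau)^{q}\ge0$ with $q=p/(p-2)\in(1,3]$ --- and that supplementary argument checks out.
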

The proof of the theorem follows immediately from the following elementary
inequality, verified in \cite{SoliTi},

\begin{equation}key-1
|1+t|^{p}\ge1+|t|^{p}+p|t|^{p-2}t+pt,\mbox{\;|t|\ensuremath{\le}1},\label{eq:elem}
\end{equation}
which in turn implies $|u_{k}|^{p}\ge|u_{k}-u|^{p}+|u|^{p}+p|u|^{p-2}u(u_{k}-u)++p|u_{k}-u|^{p-2}(u_{k}-u)u$,
with the integrals of the last two terms vanishing by assumption.
Remarkably, (\ref{eq:elem}) is false for all $p\in(1,3)$, but this
does not imply that (\ref{eq:BL}) is false for these $p$, moreover,
as we mentioned above, it is true in the case of $\ell^{p}$, although
as we show in this note, it is false for $L^{p}([0,1])$. The inequality
in (\ref{eq:BL}) can be strict. Indeed, one can easily calculate
by binomial expansion for $p=4$ that if $u_{k}\rightharpoonup u$
and $(u_{k}-u)^{3}\rightharpoonup0$ in $L^{4/3}$, then

\[
\int_{\Omega}|u_{k}|^{4}d\mu=\int_{\Omega}|u|^{4}d\mu+\int_{\Omega}|u_{k}-u|^{4}d\mu+6\int u^{2}(u_{k}-u)^{2}d\mu+o(1).
\]
There have been some modifications of Brezis-Lieb lemma, in literature,
namely \cite{MT,MT2}, but we could not find any related results without
the assumption of the a.e. convergence. In this note we prove a generalization
of (\ref{eq:BL}) to the case of vector-valued functions and $p\ge3$,
and show in Corollary \ref{cor:counterexample}  that the inequality
(\ref{eq:BL}) is false for all $p\in(1,3)$. Other results in this
note are: a different weak convergence condition that yields (\ref{eq:BL})
for all $p\ge2$ (Theorem \ref{thm:newbl-1-1}), a version of Theorem
\ref{thm:newbl} for vector-valued functions (Theorem \ref{thm:newbl-1}),
and the analysis, in Section 3, of weak limits for sequences of the
form $\varphi\circ v_{k}$ with different functions $\varphi$.

\section{Theorem \ref{thm:newbl} for vector-valued functions}
\begin{thm}
\label{thm:newbl-1}Let $(\Omega,\mu)$ be a measure space and let
$p\in[3,\infty)$ and $m\in\N$. Assume that $u_{k}\rightharpoonup u$
in $L^{p}(\Omega,\mu;\R^{m})$ and $|u_{k}-u|^{p-2}(u_{k}-u)\rightharpoonup0$
in $L^{p'}(\Omega,\mu;\R^{m})$, $p'=\frac{p}{p-1}$. Then

\begin{equation}
\int_{\Omega}|u_{k}|^{p}d\mu\ge\int_{\Omega}|u|^{p}d\mu+\int_{\Omega}|u_{k}-u|^{p}d\mu+o(1).\label{eq:BL-1}
\end{equation}
\end{thm}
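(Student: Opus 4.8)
The plan is to reduce Theorem~\ref{thm:newbl-1}, exactly as in the scalar case, to a pointwise algebraic inequality in $\R^{m}$ and then integrate. The inequality I would establish first is the vector analogue of (\ref{eq:elem}): for every $p\ge3$ and all $a,b\in\R^{m}$,
\begin{equation}
|a+b|^{p}\ge|a|^{p}+|b|^{p}+p|a|^{p-2}a\cdot b+p|b|^{p-2}b\cdot a.\label{eq:vecelem}
\end{equation}
Granting (\ref{eq:vecelem}), I would apply it pointwise with $a=u(\cdot)$ and $b=u_{k}(\cdot)-u(\cdot)$ and integrate over $\Omega$; the two linear remainders $p\int_{\Omega}|u|^{p-2}u\cdot(u_{k}-u)\,d\mu$ and $p\int_{\Omega}|u_{k}-u|^{p-2}(u_{k}-u)\cdot u\,d\mu$ then tend to $0$ --- the first because $|u|^{p-2}u\in L^{p'}(\Omega,\mu;\R^{m})$ and $u_{k}-u\rightharpoonup0$ in $L^{p}$, the second because $|u_{k}-u|^{p-2}(u_{k}-u)\rightharpoonup0$ in $L^{p'}$ by hypothesis and $u\in L^{p}$ (here $\||u_{k}-u|^{p-2}(u_{k}-u)\|_{p'}^{p'}=\|u_{k}-u\|_{p}^{p}<\infty$ shows these functions lie in $L^{p'}$, and $(L^{p})^{*}=L^{p'}$ for $1<p<\infty$ on any measure space) --- leaving $\int_{\Omega}|u_{k}|^{p}\,d\mu\ge\int_{\Omega}|u|^{p}\,d\mu+\int_{\Omega}|u_{k}-u|^{p}\,d\mu+o(1)$, which is (\ref{eq:BL-1}).

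So the whole matter is (\ref{eq:vecelem}). By homogeneity (the map $(a,b)\mapsto(\lambda a,\lambda b)$ scales both sides by $\lambda^{p}$) I would normalize $|a|=1$, the case $a=0$ being trivial, and by rotational invariance take $a=e_{1}$; writing $b=(t,b')$ with $t\in\R$, $b'\in\R^{m-1}$ and $x:=|b'|^{2}\ge0$, inequality (\ref{eq:vecelem}) becomes $P(x)\ge0$, where
\begin{equation}
P(x):=\bigl((1+t)^{2}+x\bigr)^{p/2}-(t^{2}+x)^{p/2}-pt\,(t^{2}+x)^{(p-2)/2}-1-pt.\label{eq:Pdef}
\end{equation}
At $x=0$ this is exactly the scalar inequality $|1+t|^{p}\ge1+|t|^{p}+p|t|^{p-2}t+pt$: for $|t|\le1$ it is (\ref{eq:elem}), and for $|t|>1$ it follows by applying (\ref{eq:elem}) to $1/t$ and multiplying through by $|t|^{p}$, the substitution $t\mapsto1/t$ leaving that inequality invariant. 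For $x>0$ a direct differentiation gives $P'(x)=\frac{p}{2}\bigl(R(1+t)-R(t)-R'(t)\bigr)$ with $R(y):=(y^{2}+x)^{(p-2)/2}$, and since
\[
R''(y)=(p-2)(y^{2}+x)^{(p-6)/2}\bigl((p-3)y^{2}+x\bigr)\ge0\qquad\text{for }p\ge3,\ x\ge0,
\]
the function $R$ is convex on $\R$, so $R(1+t)\ge R(t)+R'(t)$ and therefore $P'(x)\ge0$. Hence $P$ is nondecreasing on $[0,\infty)$, so $P(x)\ge P(0)\ge0$, which proves (\ref{eq:vecelem}).

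I expect the main obstacle to be finding this organization of (\ref{eq:vecelem}). The naive route --- split off the component of $b$ along $a$ and try to control the angle between $a$ and $b$ directly --- leads to a two-variable inequality that is awkward to treat, since for $p>3$ the smallest gap over the angular variable need not occur at the collinear configurations but, as the $p=4$ identity in the introduction already shows, can occur when $a\perp b$. Freezing $t$ and differentiating instead in the squared transversal length $x$ collapses the problem to the one-dimensional convexity of $y\mapsto(y^{2}+x)^{(p-2)/2}$, which holds precisely for $p\ge3$ --- the same threshold that governs (\ref{eq:elem}). Everything else --- the passage to the limit in the two linear terms, measurability of the integrands, and the identification of the dual of $L^{p}$ --- is routine.
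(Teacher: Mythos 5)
Your proof is correct, and its skeleton coincides with the paper's: reduce (\ref{eq:BL-1}) to a pointwise inequality in $\R^{m}$ whose boundary case is the scalar inequality (\ref{eq:elem}), integrate, and use the two weak-convergence hypotheses to make the cross terms vanish (that last step you handle exactly as the paper does, and correctly). Where you genuinely differ is in how the two-variable reduction of the vector inequality is sliced. The paper works in the variables $(t,\theta)=(|b|/|a|,\cos\angle(a,b))$, obtaining $F(t,\theta)\ge0$ as in (\ref{eq:elem-1}); it fixes $t$, notes convexity in $\theta$, asserts that $\partial F/\partial\theta\neq0$, and concludes the minimum sits at $\theta=\pm1$, i.e.\ at (\ref{eq:elem}). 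You instead fix the longitudinal component $t=a\cdot b/|a|$ and vary the squared transversal length $x=|b|^{2}-t^{2}$, proving $P'(x)\ge0$ from the convexity of $y\mapsto(y^{2}+x)^{(p-2)/2}$ for $p\ge3$, and you cover $|t|>1$ explicitly by the $t\mapsto1/t$ substitution rather than by the symmetry $a\leftrightarrow b$. Your slicing buys something concrete: the paper's claim $\partial F/\partial\theta\neq0$ fails as stated --- one computes $\partial_{\theta}F=pt\left[(1+t^{2}+2t\theta)^{(p-2)/2}-|t|^{p-2}-1\right]$, which for $p=4$ vanishes at $\theta=0$ for every $t$, so the convex function $F(t,\cdot)$ is minimized at an interior point and a separate check there is actually required --- whereas your monotonicity in $x$ rests on the clean identity $R''(y)=(p-2)(y^{2}+x)^{(p-6)/2}\left((p-3)y^{2}+x\right)\ge0$ and leaves no such gap, while still isolating $p\ge3$ as the exact threshold.
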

\begin{proof}
Once we prove the inequality
\begin{equation}
F(t,\theta):=|1+t^{2}+2t\theta|^{p/2}-1-|t|^{p}-p|t|^{p-2}t\theta-pt\theta\ge0,\mbox{\;|t|\ensuremath{\le}1},|\theta|\le1,\label{eq:elem-1}
\end{equation}
the assertion of the theorem will follow similarly to that of Theorem
\ref{thm:newbl}.

Note that for each $t\in[-1,1]$, the function $\theta\mapsto F(t,\theta)$
is convex on $[-1,1]$. An elementary computation shows that, for
any $t\in[-1,1]$, $\frac{\partial F(t,\theta)}{\partial\theta}\neq0$,
and thus $F(t,\theta)\ge\min\{F(t,-1),F(t,1)\}$. Since $F(t,-1)=F(-t,1)$
it suffices to show that $F(t,1)\ge0$ for all $t\in[-1,1].$ This
inequality, however, is nothing but (\ref{eq:elem}).
\end{proof}
Writing the statement of Theorem \ref{thm:newbl-1} in terms of gradients
of functions, and noting that $|\nabla u_{k}-\nabla u|^{p-2}(\nabla u_{k}-\nabla u)\rightharpoonup0$
in $L^{p'}(\Omega;\R^{N})$ can be rewritten in terms of the $p$-Laplacian,
as $-\Delta_{p}(u_{k}-u)\rightharpoonup0$ in the sense of distributions
(the relevant norm bound is already given as the $L^{p}$ bound for
the gradient in the first condition), we have
\begin{cor}
\label{cor:bl-grad}Let $\Omega\in\R^{N}$, $N\in\N$, be an open
set and let If $p\in[3,\infty)$. Assume that $\nabla u_{k}\rightharpoonup\nabla u$
in $L^{p}(\Omega;\R^{N})$ and $-\Delta_{p}(u_{k}-u)\rightharpoonup0$
in the sense of distributions. Then
\[
\int_{\Omega}|\nabla u_{k}|^{p}dx\ge\int_{\Omega}|\nabla u|^{p}dx+\int_{\Omega}|\nabla u_{k}-\nabla u|^{p}dx+o(1).
\]

\end{cor}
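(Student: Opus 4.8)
The plan is to read Corollary~\ref{cor:bl-grad} as a direct specialization of Theorem~\ref{thm:newbl-1}: take the measure space to be $\Omega\subset\R^N$ with Lebesgue measure, $m=N$, and apply the theorem to the sequence $v_k:=\nabla u_k$ with limit $v:=\nabla u$. The only genuine content is to show that the two hypotheses of the corollary are exactly (or at least imply) the two hypotheses of Theorem~\ref{thm:newbl-1} for these $v_k,v$; once that is done, \eqref{eq:BL-1} read for $v_k,v$ \emph{is} the displayed inequality of the corollary, and nothing further is required.

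First I would record that the first hypothesis needs no translation: $\nabla u_k\rightharpoonup\nabla u$ in $L^p(\Omega;\R^N)$ is literally $v_k\rightharpoonup v$ in $L^p(\Omega;\R^N)$, and in particular $v_k-v$ is bounded in $L^p$. Next, since the pointwise duality map $w\mapsto|w|^{p-2}w$ obeys $\big\||w|^{p-2}w\big\|_{L^{p'}}=\|w\|_{L^p}^{p-1}$, the sequence $g_k:=|\nabla u_k-\nabla u|^{p-2}(\nabla u_k-\nabla u)$ is bounded in the reflexive space $L^{p'}(\Omega;\R^N)$ (here $p'\in(1,\tfrac32]$ because $p\ge3$). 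A bounded sequence in $L^{p'}(\Omega;\R^N)$ converges weakly to $0$ as soon as $\int_\Omega g_k\cdot h\,\dx\to0$ for every $h$ in a subset of $L^p(\Omega;\R^N)$ with dense linear span. The second hypothesis, $-\Delta_p(u_k-u)\rightharpoonup0$ in $\mathcal D'(\Omega)$, unwinds to precisely $\int_\Omega g_k\cdot\nabla\varphi\,\dx\to0$ for all $\varphi\in C_c^\infty(\Omega)$; so, combined with the $L^{p'}$-bound, it delivers $g_k\rightharpoonup0$ in $L^{p'}(\Omega;\R^N)$, which is the second hypothesis of Theorem~\ref{thm:newbl-1}. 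Applying that theorem to $(v_k,v)$ finishes the argument.

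The hard part — really the only point needing care — is the equivalence used in the previous paragraph between the distributional statement on $-\Delta_p(u_k-u)$ and weak $L^{p'}$-convergence of the duality map tested against \emph{all} of $L^p(\Omega;\R^N)$. Since $\operatorname{div}g=0$ in $\mathcal D'$ does not by itself force $g=0$, the density of $\{\nabla\varphi:\varphi\in C_c^\infty(\Omega)\}$ is being used modulo the structural fact that in the intended applications $\nabla u_k-\nabla u$ (hence, through the nonlinear map, $g_k$) stays in the closed subspace of $L^p(\Omega;\R^N)$ generated by such gradients. In any case, for the inequality alone one does not need the full weak convergence $g_k\rightharpoonup0$: expanding via \eqref{eq:elem-1} one only needs $\int_\Omega g_k\cdot\nabla u\,\dx\to0$ and $\int_\Omega|\nabla u|^{p-2}\nabla u\cdot(\nabla u_k-\nabla u)\,\dx\to0$; the latter is immediate from $\nabla u_k\rightharpoonup\nabla u$, and the former follows by approximating $\nabla u$ in $L^p(\Omega;\R^N)$ by gradients of test functions and invoking the uniform $L^{p'}$-bound on $g_k$ together with the hypothesis. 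So the corollary reduces, after this bookkeeping, entirely to Theorem~\ref{thm:newbl-1}.
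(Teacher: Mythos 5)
Your overall route is the paper's: read the corollary as Theorem \ref{thm:newbl-1} with $m=N$, $v_k=\nabla u_k$, $v=\nabla u$, and translate the $p$-Laplacian hypothesis into the hypothesis $|v_k-v|^{p-2}(v_k-v)\rightharpoonup 0$ in $L^{p'}(\Omega;\R^N)$. You are in fact more careful than the paper here: you correctly note that $-\Delta_p(u_k-u)\rightharpoonup 0$ in $\mathcal{D}'(\Omega)$ only tests $g_k:=|\nabla u_k-\nabla u|^{p-2}(\nabla u_k-\nabla u)$ against gradients of test functions, that these do not have dense span in $L^p(\Omega;\R^N)$ (divergence-free fields sit in the annihilator), and hence that the $L^{p'}$-bound on $g_k$ does not upgrade the distributional statement to full weak convergence. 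Your reduction to the two pairings $\int_\Omega|\nabla u|^{p-2}\nabla u\cdot(\nabla u_k-\nabla u)\,dx\to 0$ and $\int_\Omega g_k\cdot\nabla u\,dx\to 0$ is also exactly what the proof of Theorem \ref{thm:newbl} actually consumes.

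The gap is in your last step: you obtain $\int_\Omega g_k\cdot\nabla u\,dx\to 0$ by ``approximating $\nabla u$ in $L^p(\Omega;\R^N)$ by gradients of test functions.'' That approximation is not available under the stated hypotheses: $\nabla u$ lies in the $L^p$-closure of $\{\nabla\varphi:\varphi\in C_c^\infty(\Omega)\}$ only when $u$ belongs to the corresponding homogeneous $W^{1,p}_0$-type space, which is not assumed. Concretely, for $\Omega=(0,1)$ and $u(x)=x$ one has $\int_0^1\varphi'\,dx=0$ for every $\varphi\in C_c^\infty((0,1))$, hence $\|\varphi'-1\|_{L^p}\ge 1$ by H\"older, so the constant field $\nabla u\equiv 1$ cannot be approximated at all. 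Your argument therefore proves the corollary only under the additional assumption that $\nabla u$ lies in the closed span of gradients of test functions (e.g.\ $u\in W^{1,p}_0(\Omega)$, or $\Omega=\R^N$ with the appropriate homogeneous space). To be fair, the paper's own one-line justification --- that the weak $L^{p'}$ condition ``can be rewritten'' as the distributional $p$-Laplacian condition plus the norm bound --- silently asserts the even stronger equivalence with full weak convergence, which fails for the same reason; so the defect is inherited from the corollary as stated rather than introduced by you, but as written your patch does not close it.
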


\section{weak convergence of compositions.}

Let $p\in(1,\infty)$. It is possible to construct a sequence $v_{k}\rightharpoonup0$
in $L^{p}([0,1])$ such that $|v_{k}|^{q-1}v_{k}$ has a nonzero weak
limit in $L^{p/q}([0,1])$ for any $q\in(1,p]$. We consider here
a more general case, comparing weak limits of sequences of the form
$\varphi(v_{k})$ with different odd continuous functions $\varphi$.

We focus here only on the measure space $[0,1]$ equipped with the
Lebesgue measure, but the argument can be easily adapted to domains
in $\R^{N}$. Let $T_{j}v(x)=v(jx)$ for $x\in[0,1/j]$, $j\in\N$,
extended periodically to the rest of the interval $[0,1]$. Note that
operators $T_{j}$ are isometries on $L^{p}([0,1])$. Oscillatory
sequences $T_{j}v$ always converge weakly to a constant function
as indicated in the following statement.
\begin{lem}
\label{lem:osc}If $v\in L^{p}([0,1]),$ $p\in(1,\infty),$ then $T_{j}v\rightharpoonup\int_{[0,1]}v\, dx$
in \textup{$L^{p}([0,1])$}.\end{lem}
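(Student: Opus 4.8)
The plan is to establish the weak convergence $T_j v \rightharpoonup \bar v := \int_{[0,1]} v\,dx$ by testing against an arbitrary $w \in L^{p'}([0,1])$, $p' = \frac{p}{p-1}$, i.e.\ by showing $\int_0^1 (T_j v)(x)\, w(x)\,dx \to \bar v \int_0^1 w(x)\,dx$. Since the operators $T_j$ are isometries, the sequence $T_j v$ is bounded in $L^p$, so it suffices to verify the convergence of these pairings on a dense subset of $L^{p'}$; the natural choice is the set of step functions, or even just indicator functions of subintervals $[a,b] \subset [0,1]$, whose linear span is dense.

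First I would reduce to the case $w = \chi_{[a,b]}$. Here the computation is essentially the classical Riemann–Lebesgue / mean-value heuristic for periodic functions: $\int_a^b (T_j v)(x)\,dx$ equals the integral of the $j$-periodic extension of $x \mapsto v(jx)$ over $[a,b]$, which is a window of length $b-a$ capturing roughly $j(b-a)$ full periods, each contributing $\frac{1}{j}\int_0^1 v$, plus two boundary fragments of length at most $\frac1j$ each. Quantitatively, one writes $[a,b]$ as a union of the maximal family of consecutive periods $[\frac{i}{j}, \frac{i+1}{j}]$ it contains, together with at most two leftover intervals of length $< \frac1j$; on each full period the integral of $T_j v$ is exactly $\frac1j \bar v$, and the leftover contributions are bounded in absolute value by $\int_{E_j} |v(jx)|\,dx$ for sets $E_j$ of measure $O(1/j)$, hence by $j^{-1/p'}\|v\|_p \cdot(\text{const})$ via Hölder, which tends to $0$. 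Summing the full-period contributions gives $\bar v$ times (number of periods)$/j \to \bar v(b-a) = \bar v \int_0^1 \chi_{[a,b]}$.

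Then I would pass from indicators to general $w \in L^{p'}$ by the standard $\varepsilon/3$ density argument: given $w$, pick a step function $s$ with $\|w - s\|_{p'} < \varepsilon$; the difference $\int (T_j v)(w - s)$ is bounded by $\|T_j v\|_p \|w-s\|_{p'} = \|v\|_p \|w - s\|_{p'} < \varepsilon \|v\|_p$ uniformly in $j$ by Hölder and the isometry property, while $\int (T_j v) s \to \bar v \int s$ by the step-function case, and finally $|\bar v|\,|\int s - \int w| \le |\bar v|\,\|s - w\|_{p'}$ is small. Letting $\varepsilon \to 0$ gives the claim.

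The only mild obstacle is bookkeeping the boundary fragments when $ja$ and $jb$ are not integers: one must make sure the count of complete periods inside $[a,b]$ is $jb - ja + O(1)$ and that the two fractional end pieces are genuinely controlled, which follows from Hölder applied to $v(j\,\cdot\,)$ on a set of measure $O(1/j)$ — crucially using $p > 1$ so that $j^{-1/p'} \to 0$. No pointwise regularity of $v$ is needed, only $v \in L^p$. This is routine, and the rest of the proof is a standard density reduction, so I do not anticipate any real difficulty.
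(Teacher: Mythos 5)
Your proof is correct and follows essentially the same route as the paper's: reduce by the isometry property and density of step functions to testing against indicators of intervals, then compute the pairing directly via the periodic structure (full periods contribute exactly $\tfrac{1}{j}\int_0^1 v$, boundary fragments are $O(j^{-1/p'})$ by H\"older). Your version merely spells out the boundary bookkeeping that the paper compresses into ``periodicity and rescaling of the integration variable.''
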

\begin{proof}
Since $\|T_{j}v\|_{p}=\|v\|_{p}$, it suffices to verify that $\int T_{j}v\psi\; dx\to\int_{[0,1]}v\, dx\int_{[0,1]}\psi\, dx$
for all step functions $\psi$, since they form a dense subspace of
$L^{p'}([0,1])$. This, however, easily follows from a particular
case $\psi=1$, which in turn can be handled by applying periodicity
and rescaling of the integration variable. \end{proof}
\begin{lem}
\label{lem:osc2}Let $1< q\le p<\infty$. If $\varphi$ is a continuous
real-valued function on $\R$ such that for some $C>0$, $|\varphi(t)|\le C(1+|t|^{q})$,
and $v\in L^{p}([0,1])$, then $\varphi(T_{j}v)=T_{j}\varphi(v)\rightharpoonup\int_{[0,1]}\varphi(v(s))ds$$ $
in $L^{p/q}([0,1])$.\end{lem}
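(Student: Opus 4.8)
The plan is to reduce the assertion to Lemma~\ref{lem:osc}. Two preliminary observations are immediate. First, the pointwise identity $\varphi(T_jv)=T_j(\varphi(v))$ holds a.e.: on $[0,1/j]$ both sides equal $\varphi(v(jx))$, and both are then extended $1/j$-periodically, so the identity is nothing but the substitution of variables built into the definition of $T_j$. Second, the growth hypothesis gives $|\varphi(v)(x)|\le C(1+|v(x)|^q)$ pointwise, and since $v\in L^p([0,1])$ we have $|v|^q\in L^{p/q}([0,1])$, while constants lie in $L^{p/q}([0,1])$ because the measure is finite; hence $w:=\varphi(v)\in L^{p/q}([0,1])$. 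So the claim is exactly that $T_jw\rightharpoonup\int_{[0,1]}w\,ds$ in $L^{p/q}([0,1])$.

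If $q<p$, then $p/q\in(1,\infty)$ and this is precisely Lemma~\ref{lem:osc} applied to $w$ with $p$ replaced by $p/q$; nothing further is required. The only genuinely nontrivial case is the endpoint $q=p$, where $p/q=1$: Lemma~\ref{lem:osc} cannot be invoked directly, because $L^1$ is not reflexive and weak convergence there must be tested against all of $L^\infty$, a space in which step functions are not dense.

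For $q=p$ I would argue by truncation. Fix $\psi\in L^\infty([0,1])$ and $M>0$, and write $w=w_M+r_M$ with $w_M:=w\,\mathbf{1}_{\{|w|\le M\}}$ and $r_M:=w-w_M$, so $\|r_M\|_{L^1}\to0$ as $M\to\infty$. The operator $T_j$ is an isometry of $L^1([0,1])$ by the same change of variables and periodicity used for $L^p$, so $\|T_jr_M\|_{L^1}=\|r_M\|_{L^1}$; and since $w_M$ is bounded it lies in $L^2([0,1])$, so Lemma~\ref{lem:osc} (with $p=2$) gives $\int T_jw_M\,\psi\,dx\to\int_{[0,1]}w_M\,dx\int_{[0,1]}\psi\,dx$. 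Splitting $w=w_M+r_M$ on both sides and using $|\int_{[0,1]}\psi|\le\|\psi\|_{L^\infty}$, one obtains
\[
\Bigl|\int T_jw\,\psi\,dx-\int_{[0,1]}w\,dx\int_{[0,1]}\psi\,dx\Bigr|\le\Bigl|\int T_jw_M\,\psi\,dx-\int_{[0,1]}w_M\,dx\int_{[0,1]}\psi\,dx\Bigr|+2\|r_M\|_{L^1}\|\psi\|_{L^\infty}.
\]
Letting $j\to\infty$ and then $M\to\infty$ kills the right-hand side, which proves $T_jw\rightharpoonup\int_{[0,1]}w\,dx$ in $L^1([0,1])$.

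The main obstacle is exactly this endpoint $q=p$: for $q<p$ the lemma is a one-line consequence of Lemma~\ref{lem:osc}, whereas at $q=p$ one must promote convergence against step functions to genuine weak convergence in $L^1$ against arbitrary $L^\infty$ test functions, which is what forces the truncation (equivalently, a uniform-integrability / Dunford--Pettis) device above. The remaining ingredients --- the identity $\varphi(T_jv)=T_j\varphi(v)$, the membership $\varphi(v)\in L^{p/q}$, and the isometry property of $T_j$ --- are routine.
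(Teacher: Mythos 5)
Your proof is correct, but it follows a genuinely different route from the paper's. The paper approximates $v$ itself by step functions: for a step function $v$ the composition $\varphi(v)$ is a bounded step function, Lemma~\ref{lem:osc} applies to it, and the general case is then obtained ``from density of step functions in $L^{p}$'' --- which tacitly uses the continuity of the superposition operator $v\mapsto\varphi(v)$ from $L^{p}$ into $L^{p/q}$ (a standard consequence of the growth bound, but a step the paper does not spell out; the paper's proof also contains the slip $\sum_{j}\varphi(t_{j})m_{j}=0$, carried over from a case where the mean vanishes). You instead push everything onto the target side: the identity $\varphi(T_{j}v)=T_{j}\varphi(v)$ together with $\varphi(v)\in L^{p/q}$ makes the case $q<p$ an immediate application of Lemma~\ref{lem:osc}, with no Nemytskii continuity needed, and you correctly isolate the endpoint $q=p$, where $L^{p/q}=L^{1}$ falls outside the hypotheses of Lemma~\ref{lem:osc}, resolving it by truncating $w=\varphi(v)$ and using that $T_{j}$ is an $L^{1}$-isometry. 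Your treatment is arguably more careful than the paper's at this endpoint, which the paper's density argument handles only implicitly (a bounded step function converges weakly in $L^{2}$, hence against all of $L^{\infty}$, and the approximation error is controlled in $L^{1}$); the paper's approach, in exchange, treats all $1<q\le p$ uniformly in a single density argument. Both arguments are sound.
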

\begin{proof}
Let $v$ be first a step function with values $t_{j}$ on intervals
of length $m_{j}$, $j=1,\dots,M$. By Lemma \ref{lem:osc}, $\varphi(T_{k}v)\rightharpoonup\sum_{j}\varphi(t_{j})m_{j}=0$.
The assertion of the lemma follows then from density of step functions
in $L^{p}$. \end{proof}
\begin{thm}
\label{thm:linindep}Let $\varphi_{i}$, $i=1,\dots,M$, be continuous
functions $\R\to\R$, odd for each $i\neq M$, and assume that for
some $q\ge1$, $C>0$, $|\varphi_{i}(t)|\le C(1+|t|^{q})$, $i=1,\dots,M$.
If for every sequence $v_{k}\in L^{\infty}([0,1])$, such that $\varphi_{i}(v_{k})\rightharpoonup0$
in $L^{1}([0,1])$, $i=1,\dots M-1$, one also has $\varphi_{M}(v_{k})\rightharpoonup0$
in $L^{1}([0,1])$, then the functions $\{\varphi_{i}\}_{i=1,\dots M}$
are linearly dependent.\end{thm}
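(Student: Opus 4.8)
The plan is to turn the infinite-dimensional hypothesis into a purely finite-dimensional assertion about the vector $\Phi(t):=(\varphi_1(t),\dots,\varphi_M(t))\in\R^M$, by feeding into the hypothesis only the oscillatory sequences supplied by Lemma~\ref{lem:osc}. First I would take $v$ to be a step function on $[0,1]$ with value $t_j$ on a set of measure $\lambda_j$ (so $\lambda_j\ge0$, $\sum_j\lambda_j=1$) and set $v_k:=T_kv\in L^\infty([0,1])$. Since each $\varphi_i(v)\in L^\infty([0,1])\subset L^2([0,1])$, Lemma~\ref{lem:osc} gives $\varphi_i(v_k)=T_k(\varphi_i(v))\rightharpoonup\sum_j\lambda_j\varphi_i(t_j)$ weakly in $L^2$, hence, $[0,1]$ having finite measure, weakly in $L^1([0,1])$. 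The hypothesis then yields, for every finite family of points $t_j$ and weights $\lambda_j$,
\[
\sum_j\lambda_j\varphi_i(t_j)=0\ \text{ for }i=1,\dots,M-1\quad\Longrightarrow\quad\sum_j\lambda_j\varphi_M(t_j)=0 .
\]
By Carathéodory's theorem the convex hull $K:=\mathrm{conv}\{\Phi(t):t\in\R\}\subset\R^M$ consists exactly of the points $\sum_j\lambda_j\Phi(t_j)$ with such weights, so this says precisely that $K$ meets the $x_M$-axis $L:=\{0\}^{M-1}\times\R$ in at most the origin.

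The second step is to note that the hypothesis also forces $\varphi_M$ to be odd. Applying the displayed implication with the two-point data $t_1=t$, $t_2=-t$, $\lambda_1=\lambda_2=\tfrac12$, the left-hand equalities hold automatically since $\varphi_i(-t)=-\varphi_i(t)$ for $i<M$, so $\tfrac12\bigl(\varphi_M(t)+\varphi_M(-t)\bigr)=0$ for every $t$. Hence $\Phi$ is odd, $K=-K$ is symmetric, so $0\in K$, and therefore $K\cap L=\{0\}$ exactly.

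Now I would argue by contradiction, assuming $\varphi_1,\dots,\varphi_M$ linearly independent. Then $\{\Phi(t):t\in\R\}$ spans $\R^M$, for otherwise some $b\neq0$ satisfies $b\cdot\Phi(t)=0$ for all $t$, i.e. $\sum_ib_i\varphi_i\equiv0$. Pick $t_1,\dots,t_M$ with $\Phi(t_1),\dots,\Phi(t_M)$ a basis of $\R^M$. Oddness of $\Phi$ gives $\pm\Phi(t_i)=\Phi(\pm t_i)\in\{\Phi(t):t\in\R\}$, so $\mathrm{conv}\{\pm\Phi(t_i):i=1,\dots,M\}\subseteq K$; and this hull is the image of the cross-polytope $\{x\in\R^M:\sum_i|x_i|\le1\}$ under the invertible linear map $e_i\mapsto\Phi(t_i)$, hence a neighborhood of $0$ in $\R^M$. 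In particular $K$ contains a point $(0,\dots,0,s)$ with $s\neq0$; writing it as $\sum_j\lambda_j\Phi(t_j)$ for finitely supported weights gives $\sum_j\lambda_j\varphi_i(t_j)=0$ for $i<M$ while $\sum_j\lambda_j\varphi_M(t_j)=s\neq0$, contradicting the displayed implication. Hence $\varphi_1,\dots,\varphi_M$ are linearly dependent.

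The one genuinely load-bearing idea is the reduction in the first step: once one sees that the right test sequences are the oscillations $T_kv$, the weak-convergence hypothesis collapses to the elementary geometric fact that the convex hull of the curve $t\mapsto\Phi(t)$ cannot cross the $x_M$-axis away from the origin, after which everything is finite-dimensional convex geometry. The point to be careful about is the oddness of $\varphi_M$ from the second step: the hypothesis does give $0\in K$ (take $v_k\equiv0$), but without symmetry of $K$ the origin could sit on $\partial K$, the $x_M$-axis could be merely tangent to $K$ there, and $K$ might still be full-dimensional, so the cross-polytope argument, which needs $0\in\mathrm{int}(K)$, would collapse; symmetry of $K$, i.e. oddness of all of $\varphi_1,\dots,\varphi_M$, is exactly what prevents this.
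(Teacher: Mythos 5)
Your proof is correct, and although it opens with the same essential move as the paper --- feeding the oscillating sequences $T_{k}v$ of Lemma \ref{lem:osc} into the hypothesis so that the weak-convergence assumption collapses to the implication ``$\int_{0}^{1}\varphi_{i}(v(s))\,ds=0$ for $i<M$ forces $\int_{0}^{1}\varphi_{M}(v(s))\,ds=0$'' --- it finishes by a genuinely different route. The paper takes $v$ to solve the ODE $v'=\gamma/\psi(v)$, so that the averages become $\gamma^{-1}\int_{-a}^{a}\varphi_{i}(t)\psi(t)\,dt$ for an arbitrary positive Lipschitz weight $\psi$, and then runs a Hilbert-space argument: the closure of the admissible $\psi$'s is the full orthogonal complement of $\{\varphi_{i}\}_{i<M}$ in $L^{2}([-a,a])$, whence $\varphi_{M}\in\mathrm{span}\{\varphi_{i}\}_{i<M}$. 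You instead take $v$ to be a step function, so the averages become convex combinations $\sum_{j}\lambda_{j}\Phi(t_{j})$ of points on the curve $\Phi=(\varphi_{1},\dots,\varphi_{M})$, the hypothesis becomes the statement that $\mathrm{conv}(\Phi(\R))$ meets the $x_{M}$-axis only at the origin, and the symmetric cross-polytope built on a basis $\Phi(t_{1}),\dots,\Phi(t_{M})$ finishes the job. Your version is more elementary: it needs no ODE (and so avoids the existence-of-$\gamma$ and density-of-$Y$ claims that the paper only sketches), the growth bound on the $\varphi_{i}$ becomes irrelevant since your test functions are bounded, and --- the nicest point --- you \emph{derive} the oddness of $\varphi_{M}$ from the hypothesis via the two-point data $\{t,-t\}$, correctly identifying it as exactly what is needed to place $0$ in the interior of the convex hull; the paper never makes this explicit, oddness of $\varphi_{M}$ emerging there only a posteriori from $\varphi_{M}\in\mathrm{span}\{\varphi_{i}\}_{i<M}$. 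The two conclusions are equivalent once one assumes, as both arguments may, that $\varphi_{1},\dots,\varphi_{M-1}$ are independent. One cosmetic remark: identifying the convex hull with the set of finite convex combinations is just its definition; Carath\'eodory's theorem (which bounds the number of points needed) is not actually used.
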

\begin{proof}
Let $\psi\ge1$ be a Lipschitz continuous function on $[-a,a]\subset\mathbb{R}$,
$a>0$, and let $v$ be a solution of the equation 
\[
v'(t)=\frac{\gamma}{\psi(v(t))},\, v(0)=-a,
\]
with the value of $\gamma=\gamma(\psi)>0$ set to satisfy $v(1)=a$.
Such $\gamma$ always exsists, since $v'(t)\le\gamma$ and thus $v(1)\le-a+\gamma$,
and on the other hand, $v(1)\ge-a+\frac{\gamma}{\psi(-a)+L(v(1)+a)}$,
where $L$ is the Lipschitz constant of $\psi$, and thus $v(1)$
is a continuous function of $\gamma\in(0,\infty)$ with the range
$(-a,+\infty)$. 

By Lemma \ref{lem:osc2},
\begin{equation}
\varphi_{i}(T_{k}v)\rightharpoonup\int_{[0,1]}\varphi_{i}(v(s))\, ds=\gamma^{-1}\int_{[-a,a]}\varphi_{i}(t)\psi(t)\, dt,\label{eq:wlim}
\end{equation}
 with the weak convergence in $L^{p}([0,1])$ for any $p\ge1$.

Consider now a closure $Y$ in $L^{2}([-a,a])$ of the span of all
positive bounded continuous functions $\psi$ on $[-a,a]$, such that
$(\varphi_{i},\psi)_{L^{2}([-a,a])}=0$, $i=1,\dots,M-1$. Note that
$Y$contains all positive even functions and thus is nontrivial. Furthermore,
$Y$ is the orthogonal complement of $\{\varphi_{i}\}_{i=1,\dots,M-1}$
in $L^{2}$: indeed, any function can be approximated by a bounded
function in this orthogonal complement, and adding a large constant
to the latter makes it a positive function orthogonal to $\{\varphi_{i}\}_{i=1,\dots,M-1}$.
By assumption, it follows from (\ref{eq:wlim}) that $\varphi_{M}\perp Y$,
and consequently it belongs to the span of $\varphi_{1},\dots,\varphi_{M-1}$
as functions on $[-a,a]$. Since the value of $a>0$ is arbitrary,
on may conclude (assuming without loss of generality that $\varphi_{1},\dots,\varphi_{M-1}$
are linearly independent, so that the coefficients in expansion of
$\varphi_{M}$ as a linear combination of $\varphi_{1},\dots,\varphi_{M-1}$
are unique), the functions $\varphi_{1},\dots,\varphi_{M}$ are linearly
dependent also as functions on $\R$. \end{proof}
\begin{cor}
\label{cor:nonzero}Let $\varphi_{i}$, $i=1,\dots,M$, be continuous
linearly independent nonzero functions $\R\to\R$, odd for each $i\neq M$,
and assume that for some $q\ge1$, $C>0$, $|\varphi_{i}(t)|\le C(1+|t|^{q})$,
$i=1,\dots,M$. There exists a sequence $v_{k}\in L^{\infty}([0,1])$,
such that $\varphi_{i}(v_{k})\rightharpoonup0$ in $L^{1}([0,1])$,
$i=1,\dots M-1$, while there is $\alpha\neq0$ such that $\varphi_{M}(v_{k})\rightharpoonup\alpha$.
If the functions $\varphi_{i}$, $i=1,\dots,M$, are piecewise-$C^{1}$
and linearly independet on any interval, and $\varphi_{M}$ changes
sign, the sequence $v_{k}$ can be chosen so that $\alpha<0$.\end{cor}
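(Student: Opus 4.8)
The plan is to recast the statement as a finite-dimensional moment problem and solve it by hand. If $v\in L^{\infty}([0,1])$ then $\varphi_{i}(v)\in L^{\infty}([0,1])\subseteq L^{2}([0,1])$ and $T_{k}(\varphi_{i}(v))=\varphi_{i}(T_{k}v)$, so Lemma \ref{lem:osc} gives $\varphi_{i}(T_{k}v)\rightharpoonup\int_{0}^{1}\varphi_{i}(v(s))\,ds$ in $L^{2}([0,1])$, hence in $L^{1}([0,1])$ since the measure is finite. Let $\nu$ be the distribution of $v$, a compactly supported probability measure on $\R$, so that $\int_{\R}\varphi_{i}\,d\nu=\int_{0}^{1}\varphi_{i}(v(s))\,ds$; conversely every compactly supported probability measure on $\R$ is the distribution of some $v\in L^{\infty}([0,1])$, for instance of its quantile function. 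Thus it suffices to produce such a $\nu$ with $\int\varphi_{i}\,d\nu=0$ for $i=1,\dots,M-1$ and $\int\varphi_{M}\,d\nu\neq0$ (respectively $<0$), and then take $v_{k}:=T_{k}v$, $\alpha:=\int\varphi_{M}\,d\nu$.

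\emph{The case $\alpha\neq0$.} Choose $a>0$ so large that $\varphi_{1},\dots,\varphi_{M}$ are linearly independent already on $[-a,a]$; such $a$ exists because the subspaces of $\R^{M}$ of linear dependence relations valid on $[-a,a]$ decrease with $a$ and so stabilize, while a relation valid on $[-a,a]$ for all large $a$ is one on $\R$, hence trivial. In $L^{2}([-a,a])$ write $\varphi_{M}=\varphi_{M}^{\parallel}+\varphi_{M}^{\perp}$ with $\varphi_{M}^{\parallel}$ in the span of $\varphi_{1},\dots,\varphi_{M-1}$; then $\varphi_{M}^{\perp}$ is nonzero, continuous and bounded, and $(\varphi_{M},\varphi_{M}^{\perp})=\|\varphi_{M}^{\perp}\|^{2}>0$. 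As $\varphi_{1},\dots,\varphi_{M-1}$ are odd, the constant function is orthogonal to each of them. Set $\psi:=\varphi_{M}^{\perp}+K$ with $K>0$ large enough that $\psi>0$ on $[-a,a]$ and, if $\int_{-a}^{a}\varphi_{M}(t)\,dt\neq0$, large enough to avoid the one value of $K$ at which $(\varphi_{M},\psi)=\|\varphi_{M}^{\perp}\|^{2}+K\int_{-a}^{a}\varphi_{M}(t)\,dt$ vanishes. Let $\nu$ be the probability measure on $[-a,a]$ with Lebesgue density proportional to $\psi$. Then $\int\varphi_{i}\,d\nu$ is a positive multiple of $(\varphi_{i},\psi)$, which is $0$ for $i<M$ and nonzero for $i=M$; this proves the first assertion. (Equivalently, as in the proof of Theorem \ref{thm:linindep}, $\nu$ is the law of the solution of $v'=\gamma/\psi(v)$, $v(0)=-a$, with $\gamma$ so that $v(1)=a$.)

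\emph{The case $\alpha<0$.} Write $\varphi_{M}^{\mathrm{ev}}(t):=\tfrac12(\varphi_{M}(t)+\varphi_{M}(-t))$ and $\varphi_{M}^{\mathrm{odd}}(t):=\tfrac12(\varphi_{M}(t)-\varphi_{M}(-t))$. If $\varphi_{M}^{\mathrm{ev}}(t_{0})<0$ for some $t_{0}$, take $\nu=\tfrac12(\delta_{t_{0}}+\delta_{-t_{0}})$: by oddness $\int\varphi_{i}\,d\nu=0$ for $i<M$ while $\int\varphi_{M}\,d\nu=\varphi_{M}^{\mathrm{ev}}(t_{0})<0$. (This covers in particular the case $\int_{-a}^{a}\varphi_{M}(t)\,dt<0$ for some $a$, since $\int_{-a}^{a}\varphi_{M}(t)\,dt=2\int_{0}^{a}\varphi_{M}^{\mathrm{ev}}(t)\,dt$.) If $\varphi_{M}$ is odd, then $\int_{-a}^{a}\varphi_{M}(t)\,dt=0$, and $\psi:=K-\varphi_{M}^{\perp}$ (positive for $K$ large) gives $(\varphi_{M},\psi)=-\|\varphi_{M}^{\perp}\|^{2}<0$. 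There remains the case $\varphi_{M}^{\mathrm{ev}}\geq0$ on $\R$ with $\varphi_{M}^{\mathrm{ev}}\not\equiv0$, where I would use linear-programming duality for the moment problem on $[-a,a]$: the infimum of $\int\varphi_{M}\,d\nu$ over probability measures $\nu$ on $[-a,a]$ with $\int\varphi_{i}\,d\nu=0$ ($i<M$) equals the largest $y_{0}$ for which $y_{0}+\sum_{i<M}y_{i}\varphi_{i}\leq\varphi_{M}$ holds on $[-a,a]$ for some $y_{1},\dots,y_{M-1}$. So $\alpha<0$ is attainable unless $\varphi_{M}$ is bounded below on $[-a,a]$ by a linear combination of $\varphi_{1},\dots,\varphi_{M-1}$; and because the $\varphi_{i}$ are linearly independent on every interval, the coefficients of any such bounding combination stay bounded as $a\to\infty$ (the map $y\mapsto\sum_{i<M}y_{i}\varphi_{i}$ restricted to $[-1,1]$ is an injective, hence bounded-below, linear map into $L^{2}$, while $\sum_{i<M}y_{i}\varphi_{i}(t)$ lies in $[-\varphi_{M}(-t),\varphi_{M}(t)]$ for every $t$), so failure of $\alpha<0$ for all $a$ would yield a single linear combination bounding $\varphi_{M}$ from below on all of $\R$.

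The step I expect to be the main obstacle is closing this last dichotomy: showing that the hypotheses ($\varphi_{M}$ piecewise-$C^{1}$, the $\varphi_{i}$ linearly independent on every interval, and $\varphi_{M}$ changing sign) preclude $\varphi_{M}$ being bounded below by a linear combination of $\varphi_{1},\dots,\varphi_{M-1}$. Substituting $t\mapsto-t$ and using oddness, such a lower bound forces $\varphi_{M}^{\mathrm{ev}}\geq0$ and confines $\varphi_{M}^{\mathrm{odd}}$ to within the slack $\varphi_{M}^{\mathrm{ev}}$ of the span of $\varphi_{1},\dots,\varphi_{M-1}$; excluding that configuration — or, if it cannot be excluded in general, recording the absence of such a lower bound as the hypothesis, of which "$\varphi_{M}$ changes sign" is the special case where $\varphi_{M}^{\mathrm{ev}}$ itself goes negative — is where I expect the real work to be.
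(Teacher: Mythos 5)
Your proof of the first assertion is complete and correct, and it is essentially the paper's own construction made explicit: the paper deduces the first assertion from Theorem \ref{thm:linindep}, whose proof realizes precisely your measure with density proportional to a positive $\psi$ orthogonal to $\varphi_{1},\dots,\varphi_{M-1}$ (via the ODE $v'=\gamma/\psi(v)$ rather than the quantile function), while your explicit choice $\psi=\varphi_{M}^{\perp}+K$ exhibits the nonvanishing of $\int\varphi_{M}\,d\nu$ directly instead of by contraposition. The problem is the second assertion. You have honestly flagged the remaining sub-case ($\varphi_{M}^{\mathrm{ev}}\ge0$ everywhere, $\varphi_{M}$ not odd) as open, and it cannot be closed, because the second assertion is false there as stated. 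Take $M=2$, $\varphi_{1}(t)=t$, $\varphi_{2}(t)=t^{2}+10t$: these are continuous, piecewise-$C^{1}$, linearly independent on every interval, $\varphi_{1}$ is odd, and $\varphi_{2}$ changes sign ($\varphi_{2}(-5)<0<\varphi_{2}(1)$); yet $v_{k}\rightharpoonup0$ in $L^{1}([0,1])$ forces $\int_{[0,1]}\varphi_{2}(v_{k})\,dx=\int v_{k}^{2}\,dx+10\int v_{k}\,dx\ge o(1)$, so $\alpha\ge0$ always. Your linear-programming dual detects exactly this obstruction: $\varphi_{2}\ge0+10\varphi_{1}$ on all of $\R$, so the dual value is $0$. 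Hence the ``real work'' you anticipated is not merely hard but impossible; some hypothesis of the kind you propose (no lower bound $\varphi_{M}\ge y_{0}+\sum_{i<M}y_{i}\varphi_{i}$ with $y_{0}\ge0$) is genuinely needed, and ``$\varphi_{M}$ changes sign'' does not imply it.

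For comparison, the paper's own argument for $\alpha<0$ breaks at the corresponding point: it constructs a nonzero bounded $v_{0}$ with $\int_{[0,1]}\varphi_{M}(v_{0}(s))\,ds=0$ but never verifies the constraints $\int_{[0,1]}\varphi_{i}(v_{0}(s))\,ds=0$ for $i<M$, so $v_{0}$ need not be admissible for the minimization (\ref{eq:inf}) and the Lagrange-multiplier step does not apply to it; moreover, even at a genuine constrained minimizer the multiplier rule only gives $\varphi_{M}'=\sum\lambda_{i}\varphi_{i}'$ on the range of $v_{0}$, which permits $\varphi_{M}=\sum\lambda_{i}\varphi_{i}+c$ with $c\neq0$ and therefore does not by itself contradict linear independence of the $\varphi_{i}$. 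In the example above the constrained minimizer is $v_{0}=0$ and $\varphi_{2}-10\varphi_{1}=t^{2}$, consistent with everything the paper's proof establishes. Your sub-cases (a) and (b) --- a negative value of $\varphi_{M}^{\mathrm{ev}}$ handled by the symmetric two-point measure, and $\varphi_{M}$ odd handled by $\psi=K-\varphi_{M}^{\perp}$ --- are correct and are the parts of the claim that can actually be salvaged; as a proof of the corollary as stated, your argument is (necessarily) incomplete, and you should record the missing no-lower-bound condition as an additional hypothesis, checking it separately in the application of Corollary \ref{cor:counterexample}.
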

\begin{proof}
The first assertion of the corollary is immediate from Theorem \ref{thm:linindep}.
Assume now, in view of Lemma \ref{lem:osc2}, that for every $v\in L^{\infty}([0,1])$,
such that $\varphi_{i}(T_{k}v)\rightharpoonup\int_{[0,1]}\varphi_{i}(v(s))ds=0$,
$i=1,\dots,M-1$, we have $\alpha=\int_{[0,1]}\varphi_{M}(v(s))ds\ge0$.
We have therefore that 
\begin{equation}
\inf_{\int_{[0,1]}\varphi_{i}(v(s))ds=0,\; i=1,\dots,M-1}\int_{[0,1]}\varphi_{M}(v(s))ds=0.\label{eq:inf}
\end{equation}
It is easy to show that there exists a non-zero bounded function $v_{0}$
such that $\int_{[0,1]}\varphi_{M}(v_{0}(s))ds=0$. Indeed, let $a,b\in\R$
be such that $\varphi_{M}(a)<0<\varphi_{M}$(b). By continuity of
$\varphi_{M}$ there exist an $\epsilon>0$ such that for any functions
$v$ and $w$ such that $\|v-a\|_{\infty}<\epsilon$ and $\|w-b\|_{\infty}<\epsilon$,
one has $\varphi_{M}(v)<0$ and $\varphi_{M}(w)>0$. Fix any such
$v,w\in C^{1}$ whose derivatives are linearly independent. Then the
function $\theta\mapsto\int_{[0,1]}\varphi_{M}(\theta v+(1-\theta)w)$,
$0\le\theta\le1$, will change sign and thus it will vanish at some
$\theta_{0}\in(0,1)$ by the intermediate value theorem. The function
$v_{0}=\theta_{0}v+(1-\theta_{0})w$ will not be a constant by the
assumption of linear independence.

Then $v_{0}$ is a point of minimum in \ref{eq:inf}, and by the Lagrange
multiplier rule, there exsist real numbers $\lambda_{1},\dots,\lambda_{M-1}$
such that for any $t$ in the range of $v_{0}$ where the functions
$\varphi_{i}$ are differentiable, 
\[
\varphi'_{M}(t)=\lambda_{1}\varphi'_{1}(t)+\dots+\lambda_{M'1}\varphi'_{M-1}(t).
\]
Since functions $\{\varphi_{i}\}_{i=1,\dots,M}$ are linearly independent
on any interval and are piecewise differentiable, we have a contradiction.\end{proof}
\begin{cor}
\label{cor:counterexample}Let $\Omega=[0,1]$, equipped with the
Lebesgue measure. Then for any $p\in[1,3)$$ $, there exists a sequence
$v_{k}\in L^{\infty}([0,1])$ such that $v_{k}\rightharpoonup0$ in
$L^{p}$, $|v_{k}|^{p-2}v_{k}\rightharpoonup0$ in $L^{p'}([0,1])$,
but the relation (\ref{eq:BL}) with $u_{k}=1+v_{k}$ does not hold.\end{cor}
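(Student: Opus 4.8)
The plan is to reformulate (\ref{eq:BL}) with $u_{k}=1+v_{k}$ and then to extract the required sequence from Corollary \ref{cor:nonzero}. Since $v_{k}\rightharpoonup0$, the weak limit of $u_{k}$ is $u\equiv1$, so $u_{k}-u=v_{k}$, the hypothesis $|u_{k}-u|^{p-2}(u_{k}-u)\rightharpoonup0$ in $L^{p'}$ is exactly $|v_{k}|^{p-2}v_{k}\rightharpoonup0$, and (\ref{eq:BL}) becomes
\[
\int_{0}^{1}|1+v_{k}|^{p}\,dx\ \ge\ 1+\int_{0}^{1}|v_{k}|^{p}\,dx+o(1).
\]
Hence it suffices to produce a uniformly bounded sequence $v_{k}\in L^{\infty}([0,1])$ with $v_{k}\rightharpoonup0$ in $L^{p}$, $|v_{k}|^{p-2}v_{k}\rightharpoonup0$ in $L^{p'}$, and $\int_{0}^{1}\bigl(|1+v_{k}|^{p}-1-|v_{k}|^{p}\bigr)\,dx\to\alpha$ for some $\alpha<0$.

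I would get such a sequence from Corollary \ref{cor:nonzero}, applied with $M=3$ and
\[
\varphi_{1}(t)=t,\qquad\varphi_{2}(t)=|t|^{p-2}t,\qquad\varphi_{3}(t)=|1+t|^{p}-1-|t|^{p}.
\]
For $p\in(1,3)$ the functions $\varphi_{1},\varphi_{2}$ are odd, all three $\varphi_{i}$ are continuous and piecewise-$C^{1}$ (indeed $x\mapsto|x|^{p}$ is $C^{1}$ on $\R$ when $p>1$, and $\varphi_{2}$ is $C^{1}$ away from $0$), and $|\varphi_{i}(t)|\le C(1+|t|^{p})$, so the corollary's growth hypothesis holds with $q=p$. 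Also $\varphi_{3}$ changes sign, since $\varphi_{3}(0)=0$ and $\varphi_{3}(t)=pt+o(t)$ as $t\to0$ (because $|t|^{p}=o(|t|)$), so $\varphi_{3}$ is negative just to the left of $0$ and positive just to the right of it.

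The main thing to check --- and, I expect, the only real obstacle --- is that $\varphi_{1},\varphi_{2},\varphi_{3}$ be linearly independent on every nondegenerate interval, which is what Corollary \ref{cor:nonzero} requires for the sharper conclusion $\alpha<0$. Suppose $a\varphi_{1}+b\varphi_{2}+c\varphi_{3}$ vanishes on an interval. Restricting to a subinterval lying in one component of $\R\setminus\{0,-1\}$, where all three functions are real-analytic, the identity extends to that whole component; on $(0,\infty)$ it reads $at+bt^{p-1}+c\bigl((1+t)^{p}-1-t^{p}\bigr)=0$ for every $t>0$, and inserting the convergent expansion $(1+t)^{p}=\sum_{j\ge0}\binom{p}{j}t^{p-j}$ ($t>1$) and matching the coefficients of the powers $t^{1}$, $t^{p-1}$ and $t^{p-2}$ --- which are pairwise distinct precisely when $p\in(1,3)\setminus\{2\}$ --- forces $a=b=c=0$; the components $(-1,0)$ and $(-\infty,-1)$ are handled the same way. (When $p=2$ one has $\varphi_{2}=\varphi_{1}$ and $\varphi_{3}=2\varphi_{1}$, so this argument, and in fact the conclusion we want, genuinely fails, consistently with (\ref{eq:HilbBl}), which shows (\ref{eq:BL}) holds at $p=2$; thus $p=2$ must be excluded from the statement. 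The endpoint $p=1$, where $\varphi_{2}=\operatorname{sgn}$ is discontinuous, is handled separately by the oscillating choice $v_{k}=T_{k}v$ with $v\equiv c$ on $[0,1/2]$ and $v\equiv-c$ on $[1/2,1]$ for a fixed $c\in(0,1)$: then $v_{k}\rightharpoonup0$ and $\operatorname{sgn}(v_{k})\rightharpoonup0$ weak-$\ast$, while $\int_{0}^{1}|1+v_{k}|\,dx=1<1+c=1+\int_{0}^{1}|v_{k}|\,dx$.)

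Granting the linear independence, Corollary \ref{cor:nonzero} produces a sequence $v_{k}\in L^{\infty}([0,1])$ --- uniformly bounded, being obtained by applying the isometries $T_{k}$ to a fixed bounded function --- with $\varphi_{1}(v_{k})\rightharpoonup0$ and $\varphi_{2}(v_{k})\rightharpoonup0$ in $L^{1}$ and $\varphi_{3}(v_{k})\rightharpoonup\alpha$, $\alpha<0$. Uniform boundedness upgrades the first two weak limits to weak convergence to $0$ in $L^{p}$ and in $L^{p'}$, so the stated hypotheses on $v_{k}$ hold, while
\[
\int_{0}^{1}|1+v_{k}|^{p}\,dx-1-\int_{0}^{1}|v_{k}|^{p}\,dx=\int_{0}^{1}\varphi_{3}(v_{k})\,dx\longrightarrow\alpha<0,
\]
so the reformulation of (\ref{eq:BL}) above --- hence (\ref{eq:BL}) with $u_{k}=1+v_{k}$ --- fails for all large $k$. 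Everything apart from the linear-independence step is routine.
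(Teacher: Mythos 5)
Your proof is correct and follows the paper's own argument: the paper likewise applies Corollary \ref{cor:nonzero} with $M=3$, $\varphi_{1}(t)=t$, $\varphi_{2}(t)=|t|^{p-2}t$ and $\varphi_{3}(t)=|1+t|^{p}-1-|t|^{p}$, but it leaves the reformulation of (\ref{eq:BL}) and the verification of all the hypotheses implicit. Your additional observations are valuable and correct: at $p=2$ one has $\varphi_{2}=\varphi_{1}$ and $\varphi_{3}=2\varphi_{1}$, so the linear independence required by Corollary \ref{cor:nonzero} fails --- and indeed (\ref{eq:BL}) holds there by (\ref{eq:HilbBl}) --- so $p=2$ must be excluded from the stated range $[1,3)$; at $p=1$ the function $\varphi_{2}=\operatorname{sgn}$ is discontinuous, so Corollary \ref{cor:nonzero} does not apply literally and your explicit two-valued oscillating sequence is the right substitute. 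The linear-independence check via the expansion of $(1+t)^{p}$ is the one substantive detail the paper omits entirely, and your treatment is sound (for $p\in(1,2)$ the third exponent to isolate is $t^{0}$ rather than $t^{p-2}$, but the conclusion $a=b=c=0$ is unaffected).
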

\begin{proof}
Let $F_{p}(t)=|1+t|^{p}-1-|t|^{p}$. Given $ $$ $$1\le p<3$ , the
function $F_{p}$ changes sign. Apply Corollary \ref{cor:nonzero}
with $M=3$, $\varphi_{1}(t)=t$ and $\varphi_{2}(t)=|t|^{p-2}t$
and $\varphi_{3}(t)=F_{p}(t)$. \end{proof}
\begin{rem}
Note that this counterexample cannot be extended to all measure spaces,
since, as we have noted, (\ref{eq:BLorig}) holds in $\ell^{p}$ under
the assumption of weak convergence alone.
\end{rem}

\section{A version of Brezis-Lieb lemma.}

In the previous section we observed, roughly speaking, that weak limits
of $\varphi_{i}(u_{k})$ for linearly independent functions $\varphi_{i}$
have independent values, and that the inequality $\int_{[0,1]}\varphi_{M}(v_{k})\ge o(1)$
holds for all sequences satisfying $\varphi_{i}(v_{k})\rightharpoonup0$,
$i=1,\dots,M$, only if $\varphi_{M}(t)-\sum_{i=1}^{M-1}\lambda_{i}\varphi_{i}(t)\ge0$
for some real $\lambda_{1},\dots,\lambda_{M}$. Therefore one may
as well use the condition $\Phi(v_{k})\rightharpoonup0$ with
$\Phi(t)=\sum_{i=1}^{M-1}\lambda_{i}\varphi_{i}(t)$. In particular,
the function $F_{p}(t)=|1+t|^{p}-1-|t|^{p}$ , $p\ge2$, dominates
the following function: $\Phi(t)=pt$ for $|t|\le1$, $\Phi(t)=p|t|^{p-2}t$
for $|t|>1$.
\begin{thm}
Let \label{thm:newbl-1-1}Let $(\Omega,\mu)$ be a measure space and
let If $p\ge2$. Assume that $u_{k}\in L^{p}(\Omega,\mu)$, $u\in L^{p}(\Omega,\mu)$
and $\Psi(u,u_{k}-u)\rightharpoonup0$ in $L^{1}(\Omega,\mu)$, where

\[
\Psi(s,t)=\begin{cases}
|s|^{p-1}t, & |t|\le|s|,\\
|s||t|^{p-2}t,\: & |t|\ge|s|
\end{cases}\:.
\]
Then

\begin{equation}
\int_{\Omega}|u_{k}|^{p}d\mu\ge\int_{\Omega}|u|^{p}d\mu+\int_{\Omega}|u_{k}-u|^{p}d\mu+o(1).\label{eq:BL-1-1}
\end{equation}
\end{thm}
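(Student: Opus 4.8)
The plan is to reduce (\ref{eq:BL-1-1}) to a pointwise inequality, exactly as in the proofs of Theorems~\ref{thm:newbl} and \ref{thm:newbl-1}, with the scalar comparison $F_p\ge\Phi$ recorded in the paragraph preceding the statement now playing the role that (\ref{eq:elem}) played there. First I would note that $\Psi(s,t)$ is, up to the harmless constant factor $p$ and the sign convention under which $|s|^{p-1}$ is read as $\operatorname{sgn}(s)|s|^{p-1}=|s|^{p-2}s$, nothing but the positively $p$-homogeneous extension of $\Phi$ from the line to the plane: $\Psi(s,t)=\tfrac1p|s|^p\Phi(t/s)$ for $s\neq0$, and $\Psi(0,t)=0$. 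Applying $F_p(\tau)\ge\Phi(\tau)$, valid for every $\tau\in\R$, with $\tau=t/s$ and multiplying through by $|s|^p>0$, one obtains the pointwise estimate
\[
|s+t|^p\ \ge\ |s|^p+|t|^p+p\,\Psi(s,t),\qquad s,t\in\R,
\]
the case $s=0$ being trivial.

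Next I would take $s=u(x)$ and $t=u_k(x)-u(x)$ and integrate over $\Omega$. One first checks that $\Psi(u,u_k-u)\in L^1(\Omega,\mu)$: on $\{|u_k-u|\le|u|\}$ one has $|\Psi(u,u_k-u)|\le|u|^{p-1}|u_k-u|$, a product of an $L^{p'}$ function and an $L^p$ function, and on $\{|u_k-u|\ge|u|\}$ one has $|\Psi(u,u_k-u)|\le|u|\,|u_k-u|^{p-1}$, an $L^p$ function times an $L^{p'}$ function, both integrable by Hölder; so the weak-$L^1$ hypothesis is meaningful. Integrating the pointwise estimate gives
\[
\int_\Omega|u_k|^p\,d\mu\ \ge\ \int_\Omega|u|^p\,d\mu+\int_\Omega|u_k-u|^p\,d\mu+p\int_\Omega\Psi(u,u_k-u)\,d\mu.
\]
Finally, testing the hypothesis $\Psi(u,u_k-u)\rightharpoonup0$ in $L^1(\Omega,\mu)$ against the bounded function $\mathbf 1$ shows that the last integral tends to $0$, so it is $o(1)$ and (\ref{eq:BL-1-1}) follows. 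It is worth recording that, in contrast with Theorem~\ref{thm:newbl}, no assumption of weak convergence $u_k\rightharpoonup u$ (nor any norm bound on the $u_k$) is used: the single condition $\Psi(u,u_k-u)\rightharpoonup0$ in $L^1$ supplies everything the argument needs.

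The only real content is the scalar inequality $F_p\ge\Phi$, i.e.\ $|1+t|^p\ge1+|t|^p+pt$ for $|t|\le1$ and $|1+t|^p\ge1+|t|^p+p|t|^{p-2}t$ for $|t|\ge1$, for all $p\ge2$; this is the analogue of (\ref{eq:elem}) and is the step I expect to be the main obstacle (and, for non-integer $p$, the only non-routine one). One must verify it on both sides of the corner of $\Phi$ at $|t|=1$ and, most delicately, near $t=0$, where in the expansion $|1+t|^p-1-|t|^p=pt+\binom{p}{2}t^2+\binom{p}{3}t^3+\cdots-|t|^p$ the competition between $|t|^p$ and $t^2$ is precisely what forces $p\ge2$ and fails for $p<2$. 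Since this inequality is already asserted before the theorem, it may be invoked directly, and what then remains is only the short homogenization-and-integration routine above, the sole points requiring care being the sign of $s$ in the homogeneous extension and the integrability of $\Psi(u,u_k-u)$ noted above.
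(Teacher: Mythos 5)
Your proof is correct and follows essentially the same route as the paper's: both reduce (\ref{eq:BL-1-1}) to the scalar inequality $F_{p}(\lambda)\ge\Phi(\lambda)$ recorded just before the theorem, homogenize it with $\lambda=(u_{k}(x)-u(x))/u(x)$ (equivalently, multiply by $|u(x)|^{p}$) to get the pointwise bound $|u_{k}|^{p}-|u|^{p}-|u_{k}-u|^{p}\ge p\,\Psi(u,u_{k}-u)$, and then integrate and test the weak-$L^{1}$ hypothesis against the constant function $1$. Your extra checks --- the integrability of $\Psi(u,u_{k}-u)$, the case $u(x)=0$, and the reading of $|s|^{p-1}$ as $|s|^{p-2}s$ (which is indeed required for the paper's literal formula for $\Psi$ to match the homogenized inequality, and is harmless since multiplying by the fixed $L^{\infty}$ function $\operatorname{sgn}(u)$ preserves weak convergence in $L^{1}$) --- only fill in details the paper leaves implicit.
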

\begin{proof}
This follows from the inequality $F_{p}(\lambda)\ge\Phi(\lambda),$
from which, with $\lambda=\frac{u_{k}(x)-u(x)}{u(x)}$, whenever $u(x)\neq 0$, 
immediately follows
\[
|u_{k}|^{p}-|u|^{p}-|u-u_{k}|^{p}\ge\Psi(u,u_{k}-u).
\]
\end{proof}

\end{document}